\newtheorem{lemma}{Lemma}[section]
\newtheorem{proposition}{Proposition}[section]
\theoremstyle{definition}
\theoremstyle{remark}
\begin{document}
{\Large \textbf{A short proof of Feige's conjecture for identically distributed random variables}}

\bigskip

\noindent

\bigskip\textbf{Mart\'{\i}n Egozcue} \footnote{Email: egozcuemj@gmail.com} \newline\textit{University of Montevideo and UCU Sede Punta del Este, Uruguay} \bigskip

\textbf{Luis Fuentes Garc\'{\i}a} \newline\textit{Departament of Mathematics, University of Coru\~{n}a,
15071 A Coru\~{n}a, Spain.}

\bigskip Abstract: \noindent In this note, we provide a short proof of Feige's conjecture for identically distributed random variables.

\section{Introduction}

In this note, we study a conjecture posits in \cite{fei}, which states that 

\[
\mathbf{P}(X_1+X_2+...+X_n<1+n)\geq 1/e,
\]
for independent, non-negative random variables and mean equal to one.

Many advances to prove this inequality have been made see  \cite{fei}; \cite{elt}; \cite{he}; \cite{gar}; \cite{guo}; \cite{alq}. \cite{fei} showed that to prove this general conjecture, one only needs to prove it for the case when each variable $X_i$ has the entire probability mass distributed on 0 and another point such that $\mathbb{E}[X_i] = 1$ (see also, \cite{bur}). 

We follow this approach and we show that the conjecture holds for equally distributed random variables. Our proof relies in some basic techniques used in the works by \cite{rigo} and \cite{bur}. 

\begin{proposition}\label{p1} Let $X_i$ be independent random variables with $\mathbf{P}(X_i=x)=1/x$ and $\mathbf{P}(X_i=0)=1-1/x,$ for all $i=1,2,...,n$ and $x>1$. Then, 

\[
\mathbf{P}(X_1+X_2+...+X_n<1+n)\geq 1/e.
\]
\end{proposition}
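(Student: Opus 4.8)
The plan is to reduce the statement to a clean inequality for binomial tails and then to establish a monotonicity property of those tails.

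\emph{Step 1 (reduction to a binomial inequality).} Write $p=1/x\in(0,1)$. Since each $X_i$ equals $x$ with probability $p$ and $0$ otherwise, the count $N:=\#\{i\le n:X_i=x\}$ is $\mathrm{Binomial}(n,p)$ and $X_1+\cdots+X_n=xN$, so
\[
\mathbf{P}(X_1+\cdots+X_n<1+n)=\mathbf{P}\big(N<(n+1)p\big)=\mathbf{P}(N\le m-1),\qquad m:=\lceil (n+1)p\rceil .
\]
If $m\ge n+1$ this probability equals $1$; otherwise $1\le m\le n$ and $p\in\big(\tfrac{m-1}{n+1},\tfrac{m}{n+1}\big]$. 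Because $p\mapsto\mathbf{P}(\mathrm{Bin}(n,p)\le m-1)$ is nonincreasing on $[0,1]$ (its derivative is $-n\binom{n-1}{m-1}p^{m-1}(1-p)^{n-m}\le0$), it suffices to prove
\[
g(n,m):=\mathbf{P}\!\left(\mathrm{Bin}\!\left(n,\tfrac{m}{n+1}\right)\le m-1\right)\ \ge\ \tfrac1e\qquad\text{for all integers }1\le m\le n .
\]

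\emph{Step 2 (the key step: $g(n,m)$ is nondecreasing in $m$).} I would use the incomplete‑Beta form $g(n,m)=\frac{n!}{(m-1)!\,(n-m)!}\int_{m/(n+1)}^{1}t^{m-1}(1-t)^{n-m}\,dt$ and integrate by parts in $g(n,m+1)$. After the binomial coefficients cancel and one substitutes $t=s/(n+1)$, the inequality $g(n,m+1)\ge g(n,m)$ becomes
\[
(m+1)^m(n-m)^{n-m}\ \ge\ m\int_{m}^{m+1}s^{m-1}(n+1-s)^{n-m}\,ds .
\]
A second integration by parts turns this into $(n-m)\int_m^{m+1}s^{m}(n+1-s)^{n-m-1}\,ds\le m^{m}(n-m+1)^{n-m}$; substituting $s=m+u$ and setting $L:=n-m+1\ge2$, the elementary bounds $(1+u/m)^m\le e^u$ and $(1-u/L)^{L-2}\le e^{-u(L-2)/L}$ collapse the integrand and reduce everything to $(L-1)(e^{2/L}-1)\le 2$, equivalently to $(2-t)e^{t}\le 2+t$ for $t=2/L\in(0,1]$. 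This last inequality holds because $f(t):=(2-t)e^{t}-2-t$ has $f(0)=f'(0)=0$ and $f''(t)=-te^{t}\le0$, so $f\le0$ on $[0,\infty)$.

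\emph{Step 3 (conclusion and where the difficulty lies).} Monotonicity in $m$ yields $g(n,m)\ge g(n,1)=\mathbf{P}(\mathrm{Bin}(n,\tfrac1{n+1})=0)=\big(1-\tfrac1{n+1}\big)^{n}=\big(1+\tfrac1n\big)^{-n}>\tfrac1e$, using $(1+1/n)^n<e$; this also pinpoints the extremal regime, since $1/e$ is only approached as $m=1$, $n\to\infty$, i.e.\ $x=n+1\to\infty$. The entire weight of the proof falls on the displayed inequality in Step 2, which is tight up to lower‑order terms: replacing $(1-t)^{n-m}$ or $t^{m-1}$ by its value at an endpoint of $[\tfrac{m}{n+1},\tfrac{m+1}{n+1}]$ loses a multiplicative factor of order $e$ and is not enough — which is precisely why a direct Chernoff or union‑bound estimate cannot deliver the constant $1/e$. (An alternative treatment of Step 2 is to fix $m$, prove that $g(n,m)$ is nonincreasing in $n$, let $n\to\infty$ to obtain $g(n,m)\ge\mathbf{P}(\mathrm{Poisson}(m)\le m-1)$, and then verify $\mathbf{P}(\mathrm{Poisson}(\mu)\le\mu-1)\ge1/e$ for integers $\mu\ge1$ by an analogous one‑step monotonicity in $\mu$.)
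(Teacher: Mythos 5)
Your proposal is correct, and its skeleton matches the paper's: reduce to the right endpoints $p=m/(n+1)$ via monotonicity of the binomial tail in $p$, prove that $g(n,m)=\mathbf{P}(\mathrm{Bin}(n,\tfrac{m}{n+1})\le m-1)$ is nondecreasing in $m$, and finish with $g(n,1)=(1+1/n)^{-n}>1/e$. Your displayed inequality $(m+1)^m(n-m)^{n-m}\ge m\int_m^{m+1}s^{m-1}(n+1-s)^{n-m}\,ds$ is exactly the paper's key inequality $q^m(1-q)^{n-m}\ge m\int_{1-q}^{1-p}t^{n-m}(1-t)^{m-1}\,dt$ after the substitution $t=1-s/(n+1)$. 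Where you genuinely diverge is in how this inequality is proved. The paper bounds the integral by the length of the interval times the maximum of the integrand, which forces a two-case analysis according to whether the maximum sits at the endpoint $1-p$ or at the interior critical point $\tfrac{n-m}{n-1}$, and this in turn requires the symmetry lemma $d(m)=d(n-m)$, the monotonicity of $w(x)=(1+1/x)^x$, and the auxiliary decreasing sequence $b(m)$. You instead integrate by parts a second time to isolate the boundary term $m^m(n+1-m)^{n-m}$, then bound the remaining integrand pointwise by $e^{2u/L}$ via $(1+u/m)^m\le e^u$ and $(1-u/L)^{L-2}\le e^{-u(L-2)/L}$, collapsing everything to the single scalar inequality $(2-t)e^t\le 2+t$ on $(0,1]$ (verified by the concavity argument $f''(t)=-te^t\le0$); I checked the integration by parts, the change of variables $s=m+u$ with $L=n-m+1\ge2$, and the final reduction $(L-1)(e^{2/L}-1)\le2$, and they are all correct, with the bound safely uniform over $1\le m\le n-1$. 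What your route buys is the elimination of the case split and of both appendix lemmas, a uniform treatment that also covers $m=1$ (which the paper handles separately via $h(n,1)<h(n,2)$), and a cleaner endgame since $(1+1/n)^{-n}>1/e$ is immediate rather than requiring a limit as $n\to\infty$; the price is that the exponential bounds are tight only up to the margin $2-(L-1)(e^{2/L}-1)=O(1/L^2)$, so the argument is more delicate to verify, whereas the paper's max-of-integrand bound is coarser termwise but is rescued by its case analysis.
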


\begin{proof}   
Since we have assumed that $P(X_i = x) = \frac{1}{x} = p$, $P(X_i = 0) = 1 - p$, then:

\[
f(p) = P(S_n < 1 + n) = \sum_{k \in \mathbb{Z}_{\geq 0}, \, k/p < 1 + n} \binom{n}{k} p^k (1-p)^{n-k}.
\]

The condition $k/p < 1 + n$ is equivalent to $k < p(1+n)$, and for $k$ an integer, this means $k \leq \lceil p(1+n) \rceil - 1$, where $\lceil z \rceil$ is the ceiling function. Thus:
\[
f(p) = P(S_n < 1 + n) = \sum_{k=0}^{\lceil (1+n)p \rceil - 1} \binom{n}{k} p^k (1-p)^{n-k}.
\]

The discontinuity in $f(p)$ is therefore a consequence of the number of summands increasing or decreasing, a condition that occurs precisely when $(1+n)p$ becomes an integer. This behavior is visualized in Figure 1 as a distinct "sawtooth" pattern.

\begin{figure}[H]
    \centering
    \includegraphics[width=0.50\textwidth]{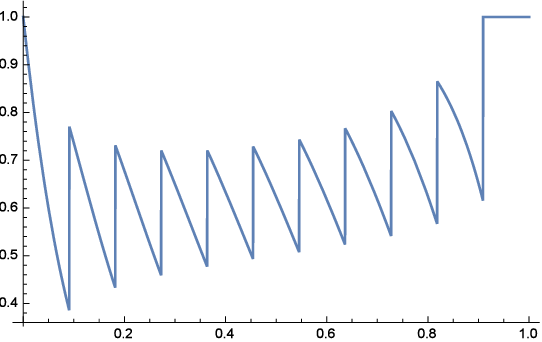}
    \caption{Plot of $f(p)$ for $n=10$}
    \label{fig:both_figures}
\end{figure}

The function $f(p)$, with $p\in (0,1)$, is continuous and differentiable in each interval where $\left(\frac{k}{1+n}, \frac{k+1}{1+n}\right],$ $0\leq k\leq n$.

In each of these intervals, $f(p)$ is decreasing. That is, if we consider:
\[
F_{n,m}(p) = \sum_{k=0}^{m} \binom{n}{k} p^k (1-p)^{n-k},
\]
it is decreasing in $p$.\footnote{\cite{elt} investigated the case where all random variables are identically distributed and take only integer values. Furthermore, through private communication, he shared with us an interesting observation based on a result by \cite{kaa}, which associates the median and mean of $S_n$. However, his result is not necessary for our proof.}

This can be proven as follows.\footnote{The next derivation resembles a generalization solution to John Smith's problem, see \cite{chau} and \cite{lov}.}  We have
\[
\left( p^k (1-p)^{n-k} \right)' = k p^{k-1} (1-p)^{n-k} - (n-k) p^k (1-p)^{n-k-1} = p^{k-1} (1-p)^{n-k-1} \left( k - n p \right),
\]

and this value is negative (or zero) if $k \leq n p$, and positive if $k > n p$.

This implies that, for a fixed $p,$
\[
F'_{n,m}(p) \leq 0 \quad \text{for } m \leq m_0 = \lfloor n p \rfloor.
\]

For $m > \lfloor n p \rfloor$, from $F'_{n,m_0}(p)$ to $F'_{n,m}(p)$, positive terms are added. Therefore:
\[
F'_{n,m_0}(p) < F'_{n,m_0+1}(p) < F'_{n,m_0+2}(p) < \dots < F'_{n,n}(p).
\]

Additionally, we know that $F_{n,n}(p) = 1$ is constant, and hence $F'_{n,n}(p) = 0$. Thus,
\[
F'_{n,m_0}(p) < F'_{n,m_0+1}(p) < F'_{n,m_0+2}(p) < \dots < F'_{n,n}(p) = 0,
\]

and effectively $F'_{n,m}(p) < 0$ for any $m$, $0 \leq m < n$.

Therefore, the minimum is achieved when $p$ is the final value of one of those intervals, that is, when $p = \frac{m}{n+1}$ for some integer $m< n+1$. In these cases, the expression for the probability of study is
\[
h(n, m) = P(S_n < 1+n) = \frac{1}{(n+1)^n} \sum_{k=0}^{m-1} \binom{n}{k} m^k (n+1-m)^{n-k}.
\]

Notice that we can express this probability by using the regularized incomplete beta function,\footnote{We are using the notation $B(z;a,b)=\int_o^zt^{a-1}(1-t)^{b-1}dt.$} with \( p = \frac{m}{n+1} \), as follows

\[
h(n,m) = (n-m+1)\binom{n}{m-1}B(1-p; n-m+1, m).
\]

It is straightforward to see that \( h(n,1) < h(n,2) \). Next, we shall prove that 
\begin{equation}
    d(m) = h(n, m+1)-h(n, m)\geq 0 \quad \text{for all } m > 1. \label{ineq1}
\end{equation}

But first note that $d(m)$ is symmetric around $m=n/2$ (see Lemma \ref{aux2} in the Appendix). Therefore, if we prove that $d(m)\geq0$ for either $m\geq n/2$ or for $m\leq n/2$ we prove that $d(m)\geq0$ for all $m.$

Inequality (\ref{ineq1}) is equivalent to proving that
\[
(n-m)\binom{n}{m}B(1-q; n-m, m+1) \geq (n-m+1)\binom{n}{m-1}B(1-p; n-m+1, m),
\]
where \( q = \frac{m+1}{n+1} \).

Since,
\[
(n-m+1)\binom{n}{m-1} = m\binom{n}{m},
\]

the inequality to prove simplifies to
\[
(n-m)B(1-q; n-m, m+1) \geq mB(1-p; n-m+1, m).
\]

Notice that,
\[
(n-m)\int_0^{1-q}t^{n-m-1}(1-t)^{m}dt \geq m\int_0^{1-p}t^{n-m}(1-t)^{m-1} dt,
\]

is equivalent to

\[
\int_{0}^{1-q} \left[ \frac{d}{dt}\Bigl(t^{n-m}\Bigr) (1-t)^m + t^{n-m} \frac{d}{dt}\Bigl((1-t)^m\Bigr) \right] dt
\ge m \int_{1-q}^{1-p} t^{n-m}(1-t)^{m-1} \, dt.
\]

Equivalently
\[
q^{m}(1-q)^{n-m} = \left( \frac{m+1}{n+1} \right)^m \left( \frac{n-m}{n+1} \right)^{n-m} \geq m \int_{1-q}^{1-p} t^{n-m} (1-t)^{m-1} \, dt.
\]

Notice that the function \( g(t) = t^{n-m}(1-t)^{m-1} \) first derivative is equal to 
\[g'(t) = g(t) \left( \frac{n-m}{t} - \frac{m-1}{1-t} \right).\]

One can check that $g(t)$ in the range \([1-q, 1-p]\) achieves its maximum at $t = 1-p$ when $m\leq \frac{n+1}{2}$ or $t=\frac{n-m}{n-1}$ in the other cases.

We start when the maximum is achieved at $t=1-p$ and we will show that the inequality holds for all $m\geq n/2.$

Owing that,
\[
m\int_{1-q}^{1-p}(1-p)^{n-m}p^{m-1} dt = m(1-p)^{n-m}p^{m-1} \frac{1}{n+1} \geq m \int_{1-q}^{1-p}t^{n-m}(1-t)^{m-1} dt.
\]

Hence, we need to analyze whether

\[
q^m(1-q)^{n-m}\geq \frac{m}{n+1}(1-p)^{n-m}p^{m-1}=\frac{(n+1-m)^{n-m}m^{m}}{(n+1)^{n}}.
\]

This involves examining the series of comparable inequalities:

\begin{align*}
(m+1)^m(n-m)^{n-m}&\geq (n+1-m)^{n-m}m^m,\\
\left(1+\frac{1}{m}\right)^m&\geq \left(1+\frac1{n-m}\right)^{n-m},\\
w(m)&\geq w(n-m).\\
\end{align*}

Since $w(x)$ is  strictly increasing (see Lemma \ref{aux1} in the Appendix), this is equivalent to $m\geq n-m$, that is, $m\geq n/2$.

Now the second case, if the maximum is achieved at $t=\frac{n-m}{n-1}.$ The the inequality to be proved is 

\[
q^m (1-q)^{n-m} \geq \frac{m}{n+1} \left(\frac{n-m}{n-1}\right)^{n-m} \left(1 - \frac{n-m}{n-1}\right)^{m-1} = \frac{m (n-m)^{n-m} (m-1)^{m-1}}{(n+1) (n-1)^{n-1}}.
\]

After some algebra this is equivalent to prove that 

\begin{equation}\label{ine1}
(m+1)^m(n-1)^{n-1}\geq m(m-1)^{m-1}(n+1)^{n-1}.
\end{equation}

Equivalently, 

\begin{equation}\label{ine2}
\underbrace{\frac{m(m-1)^{m-1}}{(m+1)^m}}_{b(m)} \leq \frac{(n-1)^{n-1}}{(n+1)^{n-1}}.
\end{equation}

Then,  
\[
\frac{b(m)}{b(m+1)}=\frac{m(m+2)^{m+1}(m-1)^{m-1}}{(m+1)m^m(m+1)^m}=\frac{(m+2)^{m+1}(m-1)^{m-1}}{m^{m-1}(m+1)^{m+1}}=\frac{w(m+1)}{w(m-1)}>1,\quad (\hbox{by Lemma }\ref{aux1}).
\]
So $b(m)$ is decreasing and therefore it is sufficient to prove (\ref{ine2}) and then (\ref{ine1}) for $m=(n+1)/2$; equivalently for $n=2m-1$. In this case, the sequences of equivalent inequalities are:

\begin{align*}
(m+1)^m(2(m-1))^{2m-2}&\geq m(m-1)^{m-1}(2m)^{2m-2},\\
(m+1)^m(m-1)^{2m-2}&\geq m^{2m-1}(m-1)^{m-1},\\
(m+1)^m(m-1)^{m-1}&\geq m^{2m-1},\\
\left(1+\frac{1}{m}\right)^m&\geq \left(1+\frac{1}{m-1}\right)^{m-1},\\
w(m)&\geq w(m-1).
\end{align*}

And the last inequality follows again from Lemma \ref{aux1}.

The analysis of these two cases proves the inequality \( h(n,1) < h(n,m) \) for all \( m > 1 \). Therefore, we need to minimize

\[
h(n,1) = \left(\frac{n}{n+1}\right)^n.
\]

It can be shown that \( h(n,1) \) is a decreasing function of \( n \). Therefore, the minimum is achieved when \( n \) tends to infinity, which yields the desired result.
 \end{proof}

\section*{Acknowledgements} 
We are grateful to Prof. Robbert Fokkink for pointing out the association between the topic of this work and John Smith's problem. We extend our gratitude to John H. Elton for his insightful comments.

\section*{Appendix}

\begin{lemma}\label{aux2}
Let, $d(m)=h(n,m+1)-h(n,m),$ where $h(n,m)=\displaystyle \sum_{k=0}^{m-1} \binom{n}{k} m^k (n+1-m)^{n-k}$. Then we have, 
$$d(m)=d(n-m).$$
\end{lemma}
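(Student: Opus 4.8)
The plan is to establish the symmetry $d(m)=d(n-m)$ directly from the closed form $h(n,m)=\sum_{k=0}^{m-1}\binom{n}{k}m^k(n+1-m)^{n-k}$ by exhibiting a bijective correspondence between the terms defining $h(n,m)$ and those defining $h(n,n-m)$, or equivalently by a probabilistic reindexing. Recall that $h(n,m)=(n+1)^n\,\mathbf{P}(S_n<1+n)$ when $p=m/(n+1)$, i.e. $h(n,m)=(n+1)^n\,F_{n,m-1}(p)$ with $p=m/(n+1)$, where $F_{n,j}(p)=\mathbf{P}(\mathrm{Bin}(n,p)\le j)$. Writing $q=1-p=(n+1-m)/(n+1)$, the reflection $k\mapsto n-k$ of a binomial random variable gives $F_{n,j}(p)=1-F_{n,n-j-1}(q)$. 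The strategy is to feed this reflection identity into $d(m)=h(n,m+1)-h(n,m)$ and check that the resulting expression is invariant under $m\mapsto n-m$.

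The key steps, in order, are as follows. First I would normalize: set $H(n,m)=h(n,m)/(n+1)^n=F_{n,m-1}\!\big(\tfrac{m}{n+1}\big)$, so that it suffices to prove $H(n,m+1)-H(n,m)=H(n,n-m+1)-H(n,n-m)$, since the factor $(n+1)^n$ is common. Second, apply the binomial reflection identity $F_{n,j}(p)=1-F_{n,n-1-j}(1-p)$ with $j=m-1$ and $p=m/(n+1)$: this turns $H(n,m)=F_{n,m-1}\big(\tfrac{m}{n+1}\big)$ into $1-F_{n,n-m}\big(\tfrac{n+1-m}{n+1}\big)$. Third, observe that $\tfrac{n+1-m}{n+1}=\tfrac{(n-m)+1 \text{-ish}}{\cdots}$ — more precisely, if we put $m'=n-m$, then $\tfrac{n+1-m}{n+1}=\tfrac{m'+1}{n+1}$, which is exactly the probability parameter attached to $H(n,m'+1)$, and the upper index $n-m=m'$ is $(m'+1)-1$. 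Hence $H(n,m)=1-F_{n,(m'+1)-1}\big(\tfrac{m'+1}{n+1}\big)=1-H(n,n-m+1)$. Fourth, substitute: $d(m)=H(n,m+1)-H(n,m)=\big(1-H(n,n-m)\big)-\big(1-H(n,n-m+1)\big)=H(n,n-m+1)-H(n,n-m)=d(n-m)$, using the same identity once with $m$ replaced by $m+1$ (giving $H(n,m+1)=1-H(n,n-m)$) and once as stated.

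The main obstacle is purely bookkeeping: getting the off-by-one alignment between the summation cutoff $m-1$ and the probability parameter $m/(n+1)$ exactly right, and making sure the reflection identity $\mathbf{P}(\mathrm{Bin}(n,p)\le j)=\mathbf{P}(\mathrm{Bin}(n,1-p)\ge n-j)=1-\mathbf{P}(\mathrm{Bin}(n,1-p)\le n-j-1)$ is applied with the correct indices at each of the two instances. An alternative, fully elementary route that avoids any probabilistic language is to substitute $k\mapsto n-k$ directly in the sum $\sum_{k=0}^{m-1}\binom{n}{k}m^k(n+1-m)^{n-k}$, compare with $\sum_{k=0}^{n-m-1}\binom{n}{k}(n+1-m)^k m^{n-k}$, and use $\sum_{k=0}^{n}\binom{n}{k}m^k(n+1-m)^{n-k}=(n+1)^n$ to relate the two partial sums; this produces the identity $h(n,m)+h(n,n-m+1)=(n+1)^n$, from which $d(m)=d(n-m)$ is immediate. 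I would present whichever of the two is shorter, but both hinge on the single clean fact that $h(n,m)$ and $h(n,n-m+1)$ are complementary.
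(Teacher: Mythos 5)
Your proposal is correct and rests on exactly the same key fact as the paper's own proof, namely the complementarity $h(n,m)+h(n,n+1-m)=(n+1)^n$, which the paper derives by the substitution $k\mapsto n-k$ together with the binomial theorem and which you obtain equivalently via the binomial-CDF reflection identity $F_{n,j}(p)=1-F_{n,n-1-j}(1-p)$. The final substitution into $d(m)$ is identical in both arguments, so this is the same proof in probabilistic rather than combinatorial dress.
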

\begin{proof}
We have that:
\begin{align*}
h(n,m) &= \sum_{k=0}^{m-1} \binom{n}{k} m^k (n+1-m)^{n-k} \\
       &= (n+1)^n - \sum_{k=m}^{n} \binom{n}{k} m^k (n+1-m)^{n-k} \\
       &= (n+1)^n - \sum_{k=m}^{n} \binom{n}{n-k} m^k (n+1-m)^{n-k}. \\
\intertext{By making the change \( n-k = s \), we get:}
h(n,m) &= (n+1)^n - \sum_{s=0}^{n-m} \binom{n}{s} m^{n-s} (n+1-m)^s \\
       &= (n+1)^n - \sum_{s=0}^{n+1-m-1} \binom{n}{s} (n+1-(n+1-m))^{n-s} (n+1-m)^s \\
       &= (n+1)^n - h(n,n+1-m).  \\
\end{align*}
From this:
$$
d(m) = h(n,m+1) - h(n,m)= h(n,n+1-m) - h(n,n-m)= d(n-m).
$$
\end{proof}

\begin{lemma}\label{aux1}
The function $w(x)=\left(1+\frac{1}{x}\right)^x$ is strictly increasing for $x>0$.
\end{lemma}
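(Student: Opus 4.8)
The plan is to show $w(x)=(1+1/x)^x$ is strictly increasing by differentiating its logarithm, since the exponential is monotone. First I would set $\varphi(x)=\ln w(x)=x\ln\!\left(1+\frac1x\right)$ and compute
\[
\varphi'(x)=\ln\!\left(1+\frac1x\right)+x\cdot\frac{1}{1+\frac1x}\cdot\left(-\frac{1}{x^2}\right)=\ln\!\left(1+\frac1x\right)-\frac{1}{x+1}.
\]
It then suffices to prove that $\psi(x):=\ln\!\left(1+\frac1x\right)-\frac{1}{x+1}>0$ for all $x>0$, because $w'(x)=w(x)\varphi'(x)$ and $w(x)>0$.

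To establish $\psi(x)>0$ I would argue by showing $\psi$ is positive in the limit and decreasing, or more directly: substitute $u=1/x\in(0,\infty)$ and reduce to $\ln(1+u)>\frac{u}{1+u}$ for $u>0$, which is the standard logarithm inequality. One clean way to see the latter: let $\eta(u)=\ln(1+u)-\frac{u}{1+u}$; then $\eta(0)=0$ and $\eta'(u)=\frac{1}{1+u}-\frac{(1+u)-u}{(1+u)^2}=\frac{1}{1+u}-\frac{1}{(1+u)^2}=\frac{u}{(1+u)^2}>0$ for $u>0$, so $\eta(u)>0$ on $(0,\infty)$. Hence $\varphi'(x)>0$ for every $x>0$, so $\varphi$ and therefore $w$ are strictly increasing.

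Alternatively, and perhaps even more transparently, one can avoid calculus on $\psi$ altogether by writing $\varphi'(x)=\int_0^{1/x}\frac{dt}{1+t}-\frac{1/x}{1+1/x}$ and noting that $\frac{1}{1+t}>\frac{1}{1+1/x}$ for all $t\in(0,1/x)$, so the integral strictly exceeds the length of the interval times the right endpoint value, giving $\varphi'(x)>0$ immediately. Either route works; I would present the $\eta'(u)>0$ computation since it is the shortest fully rigorous version.

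I do not anticipate any real obstacle here: the only mild point of care is the boundary behaviour, namely that $\eta(0)=0$ rather than $\eta(0)>0$, so one must genuinely use strict positivity of $\eta'$ on the open interval $(0,\infty)$ (not merely $\eta'\ge 0$) to conclude $\eta(u)>0$ for $u>0$; since $\eta'(u)=u/(1+u)^2$ vanishes only at $u=0$, this is fine. It is also worth remarking that the same computation shows $w(x)\uparrow e$ as $x\to\infty$, which is exactly what is invoked at the end of the proof of Proposition \ref{p1} when bounding $h(n,1)=(n/(n+1))^n=1/w(n)$ from below.
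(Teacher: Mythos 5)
Your proof is correct and follows essentially the same route as the paper: both compute the logarithmic derivative $w'(x)=w(x)\left(\log\left(1+\tfrac1x\right)-\tfrac{1}{1+x}\right)$ and reduce to the standard inequality $\log(1+z)>\tfrac{z}{1+z}$ for $z>0$, which the paper simply cites and you additionally verify via the auxiliary function $\eta$. No substantive difference.
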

\begin{proof}
The first derivative is $w'(x)=w(x)\left(\log\left(1+\frac{1}{x}\right)-\frac{1}{1+x}\right)$. Notice that, $\log(1+z)> \frac{z}{1+z}$ for any $z>0$, hence:
$$
\log\left(1+\frac{1}{x}\right)>\frac{1/x}{1+1/x}=\frac{1}{1+x}.
$$
\end{proof}


\begin{thebibliography}{}

\bibitem[Alqasem et al.(2024)]{alq}Alqasem, A., Aravinda, H., Marsiglietti, A., \& Melbourne, J. (2024). On a conjecture of Feige for discrete log-concave distributions. \emph{SIAM Journal on Discrete Mathematics}, 38(1), 93-102.

\bibitem[Burzynski(2016)]{bur}Burzynski (2016). Estimating Sums of Independent Random Variables. WP Research Science Institute, MIT.


\bibitem[Chaundy and Bullard(1960)]{chau} Chaundy, T. W., \& Bullard, J. E. (1960). John Smith’s problem. \emph{The Mathematical Gazette}, 44(350), 253-260.

\bibitem[Elton(2009)]{elt} Elton, J. H. (2009). Notes on Feige's gumball machines problem. arXiv preprint arXiv:0908.3528.

\bibitem[Feige(2006)]{fei}Feige, U. (2006). On sums of independent random variables with unbounded variance and estimating the average degree in a graph. \emph{SIAM Journal on Computing}, 35(4), 964-984.

\bibitem[Garnett(2020)]{gar}Garnett, B. (2020) Small deviations of sums of independent random variables. \emph{Journal of Combinatorial Theory}, Series A, 169, 105119.

\bibitem[Guo et al.(2020)]{guo}Guo, J., He, S., Ling, Z., \& Liu, Y. (2020). Bounding probability of small deviation on sum of independent random variables: Combination of moment approach and Berry-Esseen theorem. arXiv preprint arXiv:2003.03197.20). 

\bibitem[He et al.(2010)]{he}He, S., Zhang, J., \& Zhang, S. (2010). Bounding probability of small deviation: A fourth moment approach. \emph{Mathematics of Operations Research}, 35(1), 208-232.

\bibitem[Love(1961)]{lov} Love, E. R. (1961). John Smith's Problem. \emph{The Mathematical Gazette}, 45(354), 302-304.

\bibitem[Kaas and Buhrman(1980)]{kaa} Kaas, R., \& Buhrman, J. M. (1980). Mean, median and mode in binomial distributions. \emph{Statistica Neerlandica}, 34(1), 13-18.

\bibitem[Rigollet and Tong(2011)]{rigo}Rigollet, P., \& Tong, X. (2011). Neyman-pearson classification, convexity and stochastic constraints. \emph{Journal of Machine Learning Research}.
\end{thebibliography}
\end{document}